\documentclass[12pt, a4 paper,reqno]{amsproc}
\usepackage{epsfig,enumerate,amsmath,amsfonts,amssymb, amsthm, mathrsfs}
\usepackage{hyperref}
\theoremstyle{definition}
\newtheorem{defn}{Definition}
\newtheorem{thm}{Theorem}
\newtheorem{example}{Example}
\newtheorem{lemma}{Lemma}
\newtheorem{remark}{Remark}
\DeclareMathOperator{\RE}{Re}

\begin{document}
\title[On Zeros and Growth of ]{On Zeros and Growth of Solutions of Second Order Linear Differential Equation}
\author[ S. Kumar and M. Saini]{Sanjay Kumar and Manisha Saini} 

\address{Sanjay Kumar \\  Department of Mathematics \\  Deen Dayal Upadhyaya College \\ University of Delhi \\
New Delhi--110 078, India }

\email{sanjpant@gmail.com}

\address{Manisha Saini\\ Department of Mathematics\\ University of Delhi\\ New Delhi--110 007, India}

\email{sainimanisha210@gmail.com }
\thanks {The research work of the second author is supported by research fellowship from University Grants Commission (UGC), New Delhi.}

\keywords{entire function, meromorphic function, order of growth, exponent of convergence, complex differential equation}
\subjclass[2010]{Primary 34M10, 30D35}
\begin{abstract}
For a second order linear differential equation $f''+A(z)f'+B(z)f=0$, with $ A(z)$ and $B(z)$ being transcendental entire functions under some restriction, we have established that all non-trivial solutions are of infinite order. In addition, we have proved that these solutions have infinite number of zeros. Also, we have extended these results to higher order linear differential equations.  
\end{abstract}
\maketitle

\section{Introduction}
Consider a second order linear differential equation of the form
\begin{equation}\label{sde}
f''+A(z)f'+B(z)f=0, \quad B(z) \not \equiv 0
\end{equation}
where $A(z)$ and $B(z)$ are entire functions. We have used the notion of Value Distribution Theory of meromorphic function, also known as Nevanlinna Theory \cite{yang}. For an entire function $f$, the order of $f$ and exponent of convergence of $f$ are defined, respectively,  in the following manner,
$$ \rho(f) =  \limsup_{r \rightarrow \infty} \frac{\log^+ \log^+ M(r, f)}{\log r} , \quad \lambda(f) =\limsup_{r\rightarrow \infty} \frac{\log^+ N(r,\frac{1}{f})}{\log r} $$
where $ M(r,f)= \max\{\ |f(z)|:|z| =r \}\ $ is the maximum modulus of $f(z)$ over the circle $|z| =r$ and $N(r,\frac{1}{f})$ is the number of zeros of $f(z)$ enclosed in the disk $|z| <r$. 

It is well known that all solutions of the equation (\ref{sde}) are entire functions. Using Wiman-Valiron theory, it is proved that equation (\ref{sde}) has all solutions of finite order if and only if both $A(z)$ and $B(z)$ are polynomials \cite{lainebook}. Therefore, if either $A(z)$ or $B(z)$ are transcendental entire functions, then almost all solutions of the equation (\ref{sde}) are of infintite order. So, it is natural to find conditions on coeffiicients of the equation (\ref{sde}) such that all non-trivial solutions of the equation (\ref{sde}) are of infinite order. Our aim in this paper is also to find such $A(z)$ and $B(z)$.  It was Gundersen \cite{finitegg}, who gave a necessary condition for equation (\ref{sde}) to have a solution of finite order,   

\begin{thm}
A necessary condition for equation (\ref{sde}) to have a non-trivial solution $f$ of finite order is 
\begin{equation}\label{necc}
\rho(B)\leq \rho(A).
\end{equation} 
\end{thm}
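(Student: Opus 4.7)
The plan is to derive the bound $\rho(B) \leq \rho(A)$ directly from equation (\ref{sde}) using Nevanlinna's lemma on the logarithmic derivative. Suppose $f$ is a non-trivial solution of (\ref{sde}) with $\rho(f) < \infty$. Since $f \not\equiv 0$, divide (\ref{sde}) by $f$ and rearrange to
$$-B(z) = \frac{f''(z)}{f(z)} + A(z)\,\frac{f'(z)}{f(z)}.$$

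Next, apply the standard proximity-function inequalities (subadditivity for sums, additivity up to $\log 2$ for products) to obtain
$$m(r, B) \leq m(r, A) + m\!\left(r, \frac{f'}{f}\right) + m\!\left(r, \frac{f''}{f}\right) + O(1).$$
Because $f$ is entire of finite order, Nevanlinna's logarithmic derivative lemma (in its quantitative form for finite-order functions) yields $m(r, f^{(k)}/f) = O(\log r)$ as $r \to \infty$, with $r$ lying outside a possible exceptional set $E \subset [0, \infty)$ of finite linear measure. Since $A$ and $B$ are entire, $T(r, A) = m(r, A)$ and $T(r, B) = m(r, B)$, so
$$T(r, B) \leq T(r, A) + O(\log r), \qquad r \to \infty, \; r \notin E.$$

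The conclusion follows by taking limits. Choose a sequence $r_n \to \infty$ with $r_n \notin E$, which is possible because $E$ has finite measure. Dividing both sides by $\log r_n$, taking $\limsup$, and using the equivalent formulation $\rho(g) = \limsup_{r \to \infty} \log T(r, g)/\log r$ available for entire $g$, the $O(\log r)$ term contributes $0$, yielding $\rho(B) \leq \rho(A)$. The only mild subtlety is handling the exceptional set from the logarithmic derivative lemma, which is routinely bypassed by the subsequence argument just described; no deeper machinery is required, and in particular no pointwise estimates on $f^{(k)}/f$ are needed for this particular statement.
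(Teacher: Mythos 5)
The paper does not actually prove this statement; it is quoted from Gundersen \cite{finitegg}, so there is no internal proof to compare against. Your route --- rewrite the equation as $-B=f''/f+A\,f'/f$, pass to proximity functions, and kill the logarithmic-derivative terms using $m(r,f^{(k)}/f)=O(\log r)$ for a finite-order solution --- is the standard (indeed Gundersen's own) argument, and all of the intermediate inequalities you write are correct, including $T(r,A)=m(r,A)$ and $T(r,B)=m(r,B)$ for entire coefficients.

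There is, however, one genuine flaw in the last step. From $T(r,B)\leq T(r,A)+O(\log r)$ valid only for $r\notin E$, you ``choose a sequence $r_n\notin E$'' and take $\limsup$ along it. That bounds $\limsup_n \log T(r_n,B)/\log r_n$, which is in general only a lower bound for $\rho(B)=\limsup_{r\to\infty}\log T(r,B)/\log r$; to control $\rho(B)$ from above you must control $T(r,B)$ for \emph{all} large $r$, and a priori the radii realizing the $\limsup$ for $B$ could lie in $E$. The repair is standard and short: either invoke the sharper form of the lemma on the logarithmic derivative, which for a meromorphic function of \emph{finite} order gives $m(r,f^{(k)}/f)=O(\log r)$ with no exceptional set at all, or keep the exceptional set and use monotonicity of $T(r,B)$ --- since $E$ has finite measure, every interval $[r,2r]$ with $r$ large contains some $r'\notin E$, whence $T(r,B)\leq T(r',B)\leq T(2r,A)+O(\log r)$ for all large $r$; this is exactly the device formalized in Lemma [\ref{lainebook}] of the paper. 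With either patch the conclusion $\rho(B)\leq\rho(A)$ follows; as written, the final limit passage does not.
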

We illustrate this condition with following examples

\begin{example} $f(z)=e^{-z}$ satisfies $f''+e^{-z}f'-(e^{-z}+1)f=0,$ where $\rho(A)=\rho(B)=1$.
\end{example}\label{eg1} 

\begin{example}\label{eg2}
With $A(z)=e^z+2$ and $B(z)=1$ equation (\ref{sde}) has finite order solution  $f(z)=e^{-z}+1$, where $\rho(B)<\rho(A).$
\end{example}
Thus if $\rho(A)<\rho(B)$, then all solutions of the equation (\ref{sde}) are of infinite order. However, given necessary condition is not sufficient, for example

\begin{example}\cite{heitt}
If $A(z)=P(z)e^{z}+Q(z)e^{-z}+R(z)$, where $ P, Q$ and $ R$ are polynomials and $B(z)$ is an entire function with $\rho(B)<1$ then $\rho(f)$ is infinite, for all non-trivial solutions $f$ of the equation (\ref{sde}).
\end{example}
In the same paper \cite{finitegg}, Gundersen proved the following result:
\begin{thm}\label{thm1}
Let $f$ be a non-trivial solution of the equation (\ref{sde}) where either 
\begin{enumerate}[(i)]
\item $\rho(B)< \rho(A)<\frac{1}{2}$\\
or
\item $A(z)$ is transcendental entire function with $\rho(A)=0$ and $B(z)$ is a polynomial
\end{enumerate}
then $\rho(f)$ is infinite.
\end{thm}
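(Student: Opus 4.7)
The plan is to argue by contradiction. Assume $f$ is a non-trivial solution of equation (\ref{sde}) with $\rho(f) = \sigma < \infty$; the goal is to derive a contradiction by combining two sharp estimates from complex analysis. The first is Barry's $\cos \pi \rho$ theorem: if $g$ is entire of order $\rho(g) < 1/2$ and $\alpha \in (\cos\pi\rho(g), 1)$, then the set of $r$ satisfying $\log \min_{|z|=r}|g(z)| \geq \alpha \log M(r, g)$ has positive lower logarithmic density. The second is Wiman--Valiron theory: for entire $g$ of finite order there is a set $E$ of finite logarithmic measure such that, for $r \notin E$, at a maximum-modulus point $z_r$ with $|z_r|=r$ one has $g^{(k)}(z_r)/g(z_r) = (\nu(r,g)/z_r)^k(1+o(1))$, with $\log\nu(r,g)/\log r \to \rho(g)$.

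For part (i), I would apply Barry's theorem to $A$ to produce a set $H$ of positive lower logarithmic density on which $|A(z)| \geq \exp(r^{\rho(A)-\varepsilon})$ uniformly for $|z|=r$, and apply Wiman--Valiron to $f$ to obtain an exceptional set $E$ of finite logarithmic measure off which the asymptotic $f^{(k)}(z_r)/f(z_r) \sim (\nu(r,f)/z_r)^k$ holds at some $z_r$ with $|f(z_r)| = M(r,f)$. The set $H \setminus E$ is unbounded since $H$ has positive lower logarithmic density while $E$ has finite logarithmic measure. For $r \in H\setminus E$, dividing (\ref{sde}) by $f$ and evaluating at $z_r$ yields
\[
\left(\frac{\nu(r,f)}{z_r}\right)^2(1+o(1)) + A(z_r)\,\frac{\nu(r,f)}{z_r}(1+o(1)) + B(z_r) = 0.
\]
Since $|B(z_r)| \leq \exp(r^{\rho(B)+\varepsilon})$ with $\rho(B) < \rho(A)$, the term $|B(z_r)|$ is negligible compared to $|A(z_r)|$; solving this quadratic relation in $\nu(r,f)/z_r$ and rejecting the root that tends to $0$ (which is inconsistent with $\nu(r,f)\to\infty$ for transcendental $f$) forces $\nu(r,f)/r \geq |A(z_r)|(1+o(1)) \geq \exp(r^{\rho(A)-\varepsilon})$. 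This drives $\log\nu(r,f)/\log r \to \infty$ along $H\setminus E$, contradicting $\sigma < \infty$.

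For part (ii), the same scheme applies with $\rho(A)=0$. Barry's theorem now gives $|A(z)| \geq M(r,A)^{1-\varepsilon}$ on a set of lower logarithmic density close to $1$; because $A$ is transcendental, $M(r,A)$ outgrows any polynomial and hence dominates $|B(z)|$ for the polynomial $B$. The Wiman--Valiron comparison then yields $\nu(r,f)/r \geq M(r,A)^{1-\varepsilon}(1+o(1))$ on an unbounded set of radii, giving the same contradiction.

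The principal obstacle will be ensuring that the lower bound on $|A|$ from Barry's theorem can be invoked at the precise point $z_r$ selected by Wiman--Valiron theory. This is why Barry's theorem is indispensable: it provides a uniform lower bound on the entire circle $|z|=r$, so it automatically applies at whichever $z_r$ is chosen. A secondary point is the preliminary reduction that any non-trivial finite-order solution $f$ must be transcendental so that $\nu(r,f) \to \infty$; a non-constant polynomial solution of (\ref{sde}) would force algebraic relations between $A$ and $B$ that the hypotheses rule out.
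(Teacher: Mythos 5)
The paper does not prove this statement: it is quoted as Gundersen's theorem from \cite{finitegg} (with the boundary case $\rho(B)<\rho(A)=\tfrac{1}{2}$ attributed to \cite{heller}), so there is no internal proof to measure you against and your argument has to stand on its own. Your route --- a uniform minimum-modulus lower bound for $A$ on circles belonging to a positive-logarithmic-density set of radii, played off against the Wiman--Valiron asymptotics $f^{(k)}(z_r)/f(z_r)=(\nu(r,f)/z_r)^k(1+o(1))$ at a maximum-modulus point --- is workable, and it differs in flavour from the technique this paper uses for its own main theorem, where one bounds $|f^{(k)}/f|$ polynomially along fixed rays (Lemma \ref{gglog}) and compares a large $|B|$ against a small $|A|$; in your setting the roles are reversed, $A$ being the dominant coefficient. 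The dichotomy one extracts from $w^2(1+o(1))+A(z_r)w(1+o(1))+B(z_r)=0$ with $w=\nu(r,f)/z_r$ --- either $|w|\ge\tfrac{1}{3}|A(z_r)|$ or $|w|\le 3|B(z_r)|/|A(z_r)|$ --- does close the argument, since the second alternative forces $\nu(r,f)\to 0$ and the first forces $\log\nu(r,f)/\log r\to\infty$ on an unbounded set of radii.

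Two points need repair before this is a proof. First, your statement of Barry's theorem has the comparison constant on the wrong side: the $\cos\pi\rho$ theorem gives $\log m(r,g)\ge\alpha\log M(r,g)$ on a set of positive logarithmic density for $\alpha<\cos\pi\rho(g)$, not for $\alpha\in(\cos\pi\rho(g),1)$; as written the set may be empty. Moreover, $m(r,A)\ge M(r,A)^{\alpha}$ alone does not yield $|A(z)|\ge\exp(r^{\rho(A)-\varepsilon})$ on a positive-density set, because $\log M(r,A)\ge r^{\rho(A)-\varepsilon}$ is guaranteed only along some sequence of radii, which could miss the density set entirely if $A$ has small lower order. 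What you actually need is the Besicovitch--Barry result recorded as Lemma \ref{gglemma} in the paper, which asserts $|w(z)|>\exp(|z|^{\rho-\varepsilon})$ directly on a set of upper logarithmic density at least $1-2\rho$; cite that for part (i), and for part (ii) use the density form of the $\cos\pi\beta$ theorem with $\beta$ small together with the fact that $M(r,A)$ eventually dominates every power of $r$ when $A$ is transcendental. Second, rejecting the small root merely because it ``tends to $0$'' and this is ``inconsistent with $\nu(r,f)\to\infty$'' is too quick: $\nu(r,f)/z_r\to 0$ is perfectly consistent with $\nu(r,f)\to\infty$ whenever $\rho(f)<1$. What kills the small root is its rate: it is $O\left(|B(z_r)|/|A(z_r)|\right)$, so it would give $\nu(r,f)=O\left(r\exp\left(r^{\rho(B)+\varepsilon}-r^{\rho(A)-\varepsilon}\right)\right)\to 0$, contradicting $\nu(r,f)\ge 1$. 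With these two corrections the argument is sound.
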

Hellerstein, Miles and Rossi \cite{heller} proved Theorem [\ref{thm1}] for $\rho(B)<\rho(A)=\frac{1}{2}.$
In \cite{frei}, Frei showed that the second order differential equation,
\begin{equation}\label{Sde}
f''+e^-f'+B(z)f= 0
\end{equation} 
possesses a solution of finite order if and only if $B(z)=-n^2 , \quad  n \in \mathbb{N}$. Ozawa \cite{ozawa} proved that equation (\ref{Sde}), possesses no solution of finite order when $B(z)=az+b, \quad a \neq0$.  Amemiya and Ozawa \cite{ame}and Gundersen \cite{ggpol} studied the equation (\ref{Sde}) for $B(z)$ being a particular polynomial. After this, Langley \cite{lang} showed that the differential equation 
\begin{equation}
f''+Ce^{-z}f'+B(z)f=0
\end{equation} 
has all non-trivial solutions of infinite order, for any nonzero constant $C$ and for any nonconstant polynomial $B(z)$.
\\

J. R. Long introduced the notion of the deficient value and Borel direction into the studies of the equation (\ref{sde}). For the definition of deficient value, Borel direction and function extremal for Yang's inequality one may refer to \cite{yang}. 
\\

 In \cite{extremal}, J. R. Long proved that if $A(z)$  is an entire function extremal for Yang's inequality and $B(z)$  a transcendental entire function with $\rho(B)\neq \rho(A)$, then all solutions of the equation (\ref{sde}) are of infinite order. In \cite{jlongfab}, J.R. Long replaced the condition $\rho(B) \neq \rho(A)$ with the condition that $B(z) $ is an entire function with \emph{Fabry gaps}.
\\

 X. B. Wu  \cite{wu}, proved that if $A(z)$ is a non-trivial solution of $w''+Q(z)w=0$, where $Q(z)= b_mz^m+\ldots +b_0, \quad b_m \neq 0$ and $B(z)$ be an entire function with $\mu(B)<\frac{1}{2}+ \frac{1}{2(m+1)}$, then all solutions of equation (\ref{sde}) are of infinite order. J.R.  Long \cite{jlongfab} replaced the condition $\mu(B)< \frac{1}{2}+\frac{1}{2(m+1)}$ with $B(z)$ being an entire function with \emph{Fabry gap} such that $\rho(B) \neq \rho(A)$.
\\
The main source of the problems in complex differential equation is Gundersen's \cite{problemgg}. J.R. Long \cite{jlong} gave a partial solution for a question asked by Gundersen in \cite{problemgg}. He proved that: 
\begin{thm}\label{jrthm}
 Let $A(z)=v(z)e^{P(z)}$, where $v(z)( \not \equiv 0)$ is an entire function and $P(z)=a_nz^n +\ldots +a_0$ is a polynomial of degree $n$ such that $\rho(v)<n$. Let $B(z)=b_mz^m +\ldots +b_0$ be a non-constant polynomial of degree $m$, then all non-trivial solutions of the equation (\ref{sde}) have infinite order if one of the following condition holds:
\begin{enumerate}[(i)]
\item $m+2 <2n$;
\item $m+2>2n$ and $m+2 \neq 2kn$ for all integers $k$; 
\item $m+2=2n$ and $\frac{a_n^2}{b_m}$ is not a negative real.
\end{enumerate}
\end{thm}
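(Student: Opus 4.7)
My plan is to argue by contradiction: suppose $f$ is a non-trivial solution of (\ref{sde}) with $\rho(f)<\infty$. First I would exploit the structure of $A=ve^{P}$: the polynomial $P(z)=a_{n}z^{n}+\cdots$ has $2n$ critical rays $\arg z=\theta_{k}$ on which $\RE P(re^{i\theta_{k}})=0$, partitioning $\mathbb{C}$ into $2n$ open sectors of opening $\pi/n$. In the $n$ sectors $S_{k}^{+}$ where $\RE P(z)\to+\infty$ as $|z|\to\infty$, the hypothesis $\rho(v)<n$ gives $|A(z)|\geq \exp((1-\varepsilon)\RE P(z))$ for large $|z|$; in the complementary sectors $S_{k}^{-}$ the coefficient $A$ decays super-polynomially.

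Next, in each $S_{k}^{+}$ I would apply Gundersen's logarithmic derivative estimate: outside an exceptional set of finite logarithmic measure, $|f^{(j)}(z)/f(z)|\leq |z|^{K}$ for $j=1,2$ and some $K=K(\rho(f))$. Rewriting (\ref{sde}) as
$$-A(z)\frac{f'(z)}{f(z)}=\frac{f''(z)}{f(z)}+B(z),$$
and using the exponential lower bound on $|A|$ yields $|f'/f|\leq \exp(-(1-\varepsilon)\RE P(z))\cdot(|z|^{K}+|z|^{m})$; integration along interior rays of $S_{k}^{+}$ then gives $\log|f(re^{i\theta})|=O(\log r)$.

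In each $S_{k}^{-}$ the $A$-term is asymptotically negligible, so $f$ should satisfy $f''+Bf\approx 0$ there. For the auxiliary equation $g''+Bg=0$ the classical Hille/WKB asymptotics produce two linearly independent solutions
$$g_{\pm}(z)\sim (-B(z))^{-1/4}\exp\Bigl(\pm\tfrac{2\sqrt{-b_{m}}}{m+2}z^{(m+2)/2}\Bigr),$$
each of order $(m+2)/2$. A standard perturbation argument based on the smallness of $A$ shows that in each $S_{k}^{-}$ one has $f=c_{+}^{(k)}g_{+}+c_{-}^{(k)}g_{-}$ with $(c_{+}^{(k)},c_{-}^{(k)})\neq (0,0)$ for at least one $k$ (else $f\equiv 0$ by the identity principle).

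Finally, the contradiction comes from matching the two regimes across the critical rays. In case (i), $(m+2)/2<n$, so the opening $\pi/n$ of $S_{k}^{-}$ is smaller than the Phragm\'en--Lindel\"of critical angle $\pi/[(m+2)/2]$ for growth of order $(m+2)/2$; the polynomial bound on $|f|$ at both boundaries of $S_{k}^{-}$ then forces polynomial growth throughout $S_{k}^{-}$, contradicting the WKB exponential unless $c_{\pm}^{(k)}=0$ in every $S_{k}^{-}$. In case (ii), $m+2>2n$ with $m+2\neq 2kn$, the $m+2$ Stokes directions of $B$ do not fit consistently inside the $2n$ critical rays of $P$, obstructing any coherent choice of WKB coefficients across sectors. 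In case (iii), $m+2=2n$, the two sets of rays coincide, and the hypothesis $a_{n}^{2}/b_{m}\notin(-\infty,0)$ is precisely the algebraic condition ensuring that the WKB exponent $\pm\tfrac{\sqrt{-b_{m}}}{n}z^{n}$ is never a negative multiple of the phase $a_{n}z^{n}/n$ that controls $A$. I expect the main obstacle to be case (iii): converting this phase non-coincidence into an analytic contradiction requires a delicate Stokes-line analysis along the shared critical rays, together with a tight use of $\rho(v)<n$ to rule out that lower-order terms in $A$ rescue the alignment.
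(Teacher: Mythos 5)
First, a point of reference: the paper you are annotating does not prove Theorem \ref{jrthm} at all --- it is quoted verbatim from Long, Shi, Wu and Zhang \cite{jlong} as known background, so there is no in-paper proof to compare against. Judged on its own, your sketch does identify the correct global strategy (it is essentially the strategy of \cite{jlong} and of Gundersen's earlier work): polynomial boundedness of $f$ along rays where $|A|$ is exponentially large, obtained by integrating $f'/f$ after combining Gundersen's logarithmic-derivative estimates with the lower bound $|A(re^{i\theta})|\geq \exp((1-\varepsilon)\delta(P,\theta)r^{n})$ (which, because of the zeros of $v$, holds only for $\theta$ outside a set of measure zero --- cf.\ Lemma \ref{implem}); and an asymptotic analysis in the sectors where $A$ decays, with the arithmetic of $m+2$ versus $2n$ entering through the comparison of the Stokes geometry of $w''+Bw=0$ (sector opening $2\pi/(m+2)$) with the critical-ray geometry of $e^{P}$ (sector opening $\pi/n$).

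The genuine gap is that the entire load-bearing part of the argument is asserted rather than proved. The claim that in each sector $S_{k}^{-}$ one has $f=c_{+}^{(k)}g_{+}+c_{-}^{(k)}g_{-}$ with the Liouville--Green solutions $g_{\pm}$ of the \emph{unperturbed} equation $w''+Bw=0$ is not a ``standard perturbation argument'': one must set up and solve an integral equation in an unbounded sector, control the exponentially small but unbounded-domain perturbation $Af'$, and --- crucially --- account for the fact that such a representation with \emph{constant} coefficients is only valid within a single Stokes sector of $B$, whereas $S_{k}^{-}$ has opening $\pi/n$ and in case (ii) straddles several Stokes lines, across which the coefficients jump. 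Your treatment of cases (ii) and (iii) (``the Stokes directions do not fit consistently,'' ``delicate Stokes-line analysis'') is exactly the part of the proof that does the work, and it is missing; a correct argument for (ii) must visibly break down when $m+2=2kn$ with $k\geq 2$ (where the theorem makes no claim), and yours gives no mechanism for that. Finally, even in case (i) the Phragm\'en--Lindel\"of step needs the a priori growth bound of order $(m+2)/2<n$ \emph{inside} $S_{k}^{-}$, which itself depends on the unproved representation, and needs the polynomial bounds to be transported from rays interior to $S_{k}^{\pm}$ (where they are actually established) to a sector slightly larger than $S_{k}^{-}$. As it stands the proposal is a correct roadmap, not a proof.
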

 In this paper,  we are assuming $B(z)$ to be a transcendental entire function in Theorem \ref{jrthm}. We now recall the notion of \emph{critical rays} and \emph{Fabry gap}:

\begin{defn}\label{def1}\cite{jlong}
Let $P(z)=a_{n}z^n+a_{n-1}z^{n-1}+\ldots +a_0$, $a_n\neq0$ and $\delta(P,\theta)=\RE(a_ne^{\iota n \theta})$. 
A ray $\gamma = re^{\iota \theta}$  is called \emph{critical ray} of $e^{P(z)}$ if $\delta(P,\theta)=0.$ 
\end{defn}

It can be easily seen  that there are $2n$  different critical rays of $e^{P(z)}$ which divides the whole complex plane into $2n$ distinict sectors of equal length $\frac{\pi}{n}.$  Also $\delta(P,\theta)>0$ in $n$ sectors and $\delta(P,\theta)<0$ in remaining $n$ sectors. We note that $\delta(P,\theta)$ is alternatively positive and negative in the $2n$ sectors.
\begin{defn}\cite{hayman} 
Let $g(z)=\sum_{n=0}^{\infty}a_{\lambda_n}z^{\lambda_n}$  be an entire function. If the sequence $(\lambda_n)$ satisfies 
$$ \frac{\lambda_n}{n} \rightarrow \infty $$
as $ n \rightarrow \infty$, then $g(z) $ has Fabry gap. 
\end{defn}
An entire function with Fabry gap has order positive or infinity \cite{hayman}.
We now fix some notations, \\
$ E^+ = \{ \theta \in [0,2\pi]: \delta(P,\theta)\geq 0\}$ and $E^- = \{ \theta \in [0,2\pi]: \delta(P,\theta)\leq 0 \}.$
\\
Let $\alpha>0$ and $\beta>0$ be such that $\alpha<\beta$ then 
\[\Omega(\alpha,\beta)= \{z\in \mathbb{C}: \alpha<\arg z <\beta \}.\]

In this paper, we will prove the following theorem: 
\begin{thm}\label{Main}
Suppose $A(z)=v(z)e^{P(z)}$ be an entire function with $\lambda(A)<\rho(A)=n$, where $P(z)=a_nz^n+ \ldots a_0$ is a polynomial of degree $n$. Suppose that
\begin{enumerate}
\item B(z) be a transcendental entire function satisfying $\rho(B)\neq \rho(A)$ or
\item B(z) be a transcendental entire function with Fabry gap.
\end{enumerate}
 Then all non-trivial solutions of the equation (\ref{sde}) are of infinite order. Moreover, all non-trivial solutions of the equation (\ref{sde}) have infinite number of zeros.  
\end{thm}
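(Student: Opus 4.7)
The plan is to argue by contradiction: assume some non-trivial $f$ has $\rho(f) < \infty$. Gundersen's estimate on logarithmic derivatives then produces a constant $M > 0$ and an exceptional set $E_0 \subset (1,\infty)$ of finite logarithmic measure such that
\[
\left|\frac{f'(z)}{f(z)}\right| + \left|\frac{f''(z)}{f(z)}\right| \le |z|^M \qquad \text{for all } |z| \notin E_0.
\]
Since $\lambda(A) < \rho(A) = n$, Hadamard factorization lets us arrange $\rho(v) < n$; then for every $\theta$ in the interior of $E^-$,
\[
|A(re^{i\theta})| \le \exp\bigl(r^{\rho(v)+\varepsilon}\bigr) \exp\bigl(\delta(P,\theta)\, r^n + O(r^{n-1})\bigr) \longrightarrow 0
\]
exponentially in $r$. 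Rewriting the equation as $B = -f''/f - A\,(f'/f)$ and combining the two estimates yields the key pointwise bound
\[
|B(re^{i\theta})| \le 2 r^M \qquad \text{for every } \theta \in \mathrm{int}(E^-) \text{ and all large } r \notin E_0.
\]

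The contradiction in case (1) splits on whether $\rho(B) > n$ or $\rho(B) < n$. If $\rho(B) > n$, evaluating $B = -f''/f - A\,(f'/f)$ at a point realizing $M(r,B)$ with $r \notin E_0$ gives $M(r,B) \le r^M + M(r,A)\, r^M$, hence $\rho(B) \le \rho(A) = n$, a contradiction. If $\rho(B) < n$, observe that $E^+$ decomposes into $n$ open sectors, each of opening exactly $\pi/n$. Using continuity of $B$ and extending the key bound from $\mathrm{int}(E^-)$ up to the critical rays that bound these sectors, $|B|$ is polynomially bounded on the boundary of each $E^+$ sector. Since $\rho(B) < n$ is strictly less than the critical exponent $n$ for a sector of opening $\pi/n$, a Phragmen-Lindelof argument (applied after an infinitesimal enlargement of $E^+$ to bypass exceptional-set issues on the critical rays) forces $|B(z)| \le 2|z|^M$ throughout each $E^+$ sector, and hence throughout $\mathbb{C}$. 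So $B$ is a polynomial, contradicting its transcendence. For case (2), I invoke the classical Fabry-gap growth property: there exists $F \subset (0,\infty)$ of logarithmic density $1$ such that, uniformly in $\theta$, $\log|B(re^{i\theta})| = (1 + o(1)) \log M(r,B)$ for $r \in F$. Fixing any $\theta_0 \in \mathrm{int}(E^-)$ and restricting $r$ to $F \setminus E_0$ (still of full logarithmic density), the key bound forces $\log M(r,B) = O(\log r)$ along this set, and monotonicity of $M(\cdot,B)$ extends this to all $r$, again making $B$ a polynomial. The main technical hurdles I foresee are extending the key bound continuously up to the critical rays so Phragmen-Lindelof applies cleanly on closed $E^+$ sectors, and invoking the Fabry-gap estimate in the uniform-in-$\theta$ form needed.

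For the zero-count assertion, once $\rho(f) = \infty$ is established, a standard Bank-Laine-type result — saying that if $A, B$ are entire of finite order and $\rho(f) > \max(\rho(A), \rho(B))$, then $\lambda(f) = \rho(f)$ — immediately gives $\lambda(f) = \infty$, so $f$ has infinitely many zeros. A self-contained argument for the same fact proceeds by contradiction: if $f$ had only finitely many zeros, Hadamard factorization would yield $f = \pi\, e^g$ with $\pi$ polynomial and $g$ entire, and substituting into the equation would produce the Riccati-type identity
\[
(g')^2 + \Bigl(A + \tfrac{2\pi'}{\pi}\Bigr) g' + g'' + \Bigl(B + A\tfrac{\pi'}{\pi} + \tfrac{\pi''}{\pi}\Bigr) = 0.
\]
A Wiman-Valiron analysis of $e^g$ along its central-index sequence $z_r$ shows $(g'(z_r))^2$ is the dominant term and forces $\rho(e^g) \le \max(\rho(A), \rho(B)) < \infty$, contradicting the already established $\rho(f) = \infty$.
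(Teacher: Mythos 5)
Your argument for the growth statement is essentially sound, but it takes a genuinely different route from the paper's. Where you propagate the upper bound $|B|\le 2|z|^{M}$ from the interior of $E^{-}$ across each $E^{+}$ sector by Phragm\'en--Lindel\"of and conclude that $B$ would be a polynomial (contradicting transcendence), the paper instead produces a \emph{lower} bound for $|B|$ on rays or radii lying in $E^{-}$ --- via Besicovitch's minimum-modulus theorem when $0<\rho(B)\le\tfrac12$, via Phragm\'en--Lindel\"of (existence of a sector where $B$ attains its order) when $\rho(B)\ge\tfrac12$, via Barry's theorem when $\rho(B)=0$, and via Lemma \ref{fablemma} in the Fabry-gap case --- and contradicts it directly against $r^{2\rho(f)}(1+o(1))$. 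Your route needs two repairs: for the Phragm\'en--Lindel\"of boundary data you should use Gundersen's estimate in its ray form (Lemma \ref{gglog}(i)), since the finite-logarithmic-measure version leaves radial gaps on the boundary rays that an angular enlargement of the sector cannot remove; and the Fabry-gap property you quote (logarithmic density one, $(1+o(1))$, uniform in $\theta$) is stronger than what Lemma \ref{fablemma} supplies, although the weaker version plus Cauchy estimates along the resulting unbounded sequence of radii still forces $B$ to be a polynomial. With those fixes the growth part stands, and it has the merit of replacing the paper's three subcases on $\rho(B)$ by a single mechanism.

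The zero-count part, however, has a genuine gap. The ``standard Bank--Laine-type result'' you invoke --- that $\rho(f)>\max(\rho(A),\rho(B))$ with $A,B$ of finite order forces $\lambda(f)=\rho(f)$ --- is false for individual solutions: $f=e^{e^{z}}$ satisfies $f''-(e^{2z}+e^{z})f=0$, so $\rho(f)=\infty>1=\max(\rho(A),\rho(B))$ while $\lambda(f)=0$. (Results of that type in Bank--Laine theory concern the product $E=f_{1}f_{2}$ of two linearly independent solutions, not a single solution.) Your fallback argument breaks at the same place: in the Riccati identity the term $(g')^{2}$ need \emph{not} dominate, because nothing prevents $B$ from cancelling it exactly --- in the example above $(g')^{2}+g''+B=e^{2z}+e^{z}-(e^{2z}+e^{z})=0$. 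Moreover, even a bound $\rho(g)\le\max(\rho(A),\rho(B))$ would not bound $\rho(e^{g})$: any transcendental $g$ of finite order already gives $\rho(e^{g})=\infty$, so no contradiction with $\rho(f)=\infty$ can arise this way. A correct proof must use the specific hypotheses on $A$ and $B$; the paper does this by writing $f=he^{Q}$ with $h$ carrying the zeros of $f$, so that $\lambda(f)=\rho(h)$, substituting into the equation, and arguing that the transformed coefficients force $\rho(h)>0$, whence $f$ has infinitely many zeros. In your framework the analogous step would be to run the Riccati identity for $g'$ along rays in $E^{-}$, where $A$ is exponentially small and $B$ is large, rather than appealing to dominance of $(g')^{2}$.
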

In Theorem [\ref{Main}]  part (2), $B(z)$ may be a transcendental entire function with order equal to order of entire function $A(z)$. J. R. Long have proved Theorem [\ref{Main}], for $A(z)$ being an entire function extremal for Yang's inequality in \cite{jlongfab} and \cite{extremal}.
We illustrate our result with some examples
\begin{example}
$$f''+Q(z)e^{P(z)}f'+B(z) f=0,$$ 
where $Q(z)$and $P(z)$ are polynomials and $B(z)$ is any transcendental entire funcion with $\rho(B)\neq $ degree of $P(z)$. Then $\rho(f)=\infty$, for all non-trivial solutions. 
\end{example}
\begin{example}
$$f''+\sin(z) e^{P(z)}f'+ \cos(z^{\frac{n}{2}}) f=0,$$
where $P(z)$ is a polynomial of degree  $m>1, m\neq \frac{n}{2}$ and $n\in \mathbb{N}$ , then all non-trivial solutions are of infinite order.
\end{example}

This paper is orgnised in the following manner: in section 2, we give results which will be useful in proving our main result. In section 3, we will prove our main theorem. In section 4, we will extend our result to higher order linear differential equations.

\section{Auxiliary Result}

In this section, we present some known results, which will be useful in proving Theorem [\ref{Main}]. These results involves logarithmic measure and logarithmic density of sets, therefore we recall these concepts:
\\
 The Lebesgue linear measure of a set $E\subset [0,\infty)$ is defined as $m(E)= \int_{E} dt$. The logarithmic measure of a set $F \subset [1,\infty)$ is given by $m_1(F)= \int_{F}\frac{dt}{t}$. The upper and lower logarithmic densities of  a set $F \subset [1,\infty)$ are given, respectively, by
$$\overline{\log dens(F)} =\limsup_{r\rightarrow \infty}\frac{m_1(F\cap[1,r])}{\log r}$$ 
$$\overline{\log dens(F)} =\liminf_{r\rightarrow \infty}\frac{m_1(F\cap[1,r])}{\log r}$$ 
Also, logarithmic density of a set $F\subset [1,\infty)$ is defined as
 $$\log dens(F)=\overline{\log dens(F)} =\overline{\log dens(F)}.$$
The next lemma is due to Gundersen \cite{log gg} and is used thoroughly during the years.

\begin{lemma}\label{gglog}
Let $f$ be a transcendental entire function of finite order $\rho$, let $\Gamma= \{\ (k_1,j_1), (k_2,j_2) \ldots (k_m,j_m) \}\ $ denote finite set of distinct pairs of integers that satisfy $ k_i > j_ i \geq 0,$  for $i=1,2, \ldots m, $ and let $\epsilon>0$ be a given constant. Then the following three statements holds:

\begin{enumerate}[(i)]
\item  there exists a set $E_1 \subset[0,2\pi)$ that has linear measure zero, such that if $\psi_0 \in [0,2\pi)\setminus E_1, $ then there is a constant $R_0=R_0(\psi_0)>0$  so that for all $z$ satisfying $\arg z =\psi_0$ and $|z| \geq R_0$and for all $(k,j)\in \Gamma$, we have
\begin{equation} \label{guneq}
|f^{(k)}(z)/f^{(j)}(z)| \leq |z|^{(k-j)(\rho-1+\epsilon)}
\end{equation}

\item there exists a set $E_2 \subset (1,\infty)$ that has finite logarithmic measure, such that for all $z$ satisfying $|z| \not \in E_2 \cup [0,1]$  and for all $(k,j) \in \Gamma$, the inequality (\ref{guneq}) holds.

\item there exists a set $E_3\subset [0,\infty)$ that has finite linear measure, such that for all $z$ satisfying $|z|\not \in E_3$ and for all $(k,j) \in \Gamma$, we have
\begin{equation}
|f^{(k)}(z)/ f^{(j)}(z)| \leq |z|^{(k-j)(\rho+\epsilon)}.
\end{equation}
\end{enumerate}
\end{lemma}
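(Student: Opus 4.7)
The plan is to establish the three estimates by first proving the case $(k,j)=(1,0)$ — that is, a bound for the logarithmic derivative $|f'/f|$ — and then bootstrapping to arbitrary $(k,j)\in\Gamma$ by iteration. The starting tool is the Poisson-Jensen formula applied to $\log|f|$ on a disk $|w|\le R$; differentiating the resulting identity in $z$ and specialising to $R=2|z|$ yields the classical inequality
\[
\left|\frac{f'(z)}{f(z)}\right|\ \le\ \frac{16R}{(R-|z|)^2}\,T(R,f)\ +\ \sum_{|a_\mu|<R}\frac{2R}{|z-a_\mu|(R-|z|)},
\]
where the $a_\mu$ are the zeros of $f$. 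Since $f$ has finite order $\rho$, one has $T(2|z|,f)\le|z|^{\rho+\eta}$ for any prescribed $\eta>0$ and all sufficiently large $|z|$, so the first term contributes only $|z|^{\rho-1+\eta}$. The whole proof therefore reduces to controlling the zero-sum $S(z):=\sum_{|a_\mu|<2|z|}|z-a_\mu|^{-1}$, and the three statements (i)--(iii) correspond to three different ways of excising the values of $z$ that lie near the zeros of $f$.

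For (iii), I would exclude the union of radial neighbourhoods $|z|\in[|a_\mu|-|a_\mu|^{-\rho-\eta},|a_\mu|+|a_\mu|^{-\rho-\eta}]$; the total linear measure is bounded by $\sum_\mu|a_\mu|^{-\rho-\eta}$, which is finite because $n(t,1/f)\le t^{\rho+\eta}$. Off this set one has $|z-a_\mu|\ge\tfrac12|z|^{-\rho-\eta}$ for every $\mu$, which combined with the zero-count gives $S(z)\le|z|^{2\rho+2\eta}$; together with the integral term this yields the coarser bound $|z|^{\rho+\epsilon}$ of part (iii). For (ii), one refines the exclusion — smaller discs around each zero, analysed on a logarithmic scale — so that $S(z)$ is controlled by $|z|^{\epsilon}$ off a set of finite logarithmic measure, giving the sharper exponent $\rho-1+\epsilon$. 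For (i), the argument is a Fubini-type one: for each zero $a_\mu$ the set of angles $\psi_0\in[0,2\pi)$ for which the ray $\arg z=\psi_0$ passes within distance $|a_\mu|^{-1-\eta}$ of $a_\mu$ has linear measure at most $C|a_\mu|^{-2-\eta}$; summing over $\mu$ and using $\sum|a_\mu|^{-2-\eta}<\infty$ (by finite order) produces a null set of bad angles, off which $S(re^{i\psi_0})$ is polynomially bounded in $r$ and yields the bound $r^{\rho-1+\epsilon}$.

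The general case $(k,j)\in\Gamma$ follows by iteration: write $f^{(k)}/f^{(j)}=\prod_{i=j}^{k-1}f^{(i+1)}/f^{(i)}$, note that each $f^{(i)}$ is entire of the same order $\rho$, and apply the basic logarithmic-derivative bound to every factor; the union of the finitely many resulting exceptional sets is again of the correct type (a finite union of null angle-sets is null, and a finite union of sets of finite logarithmic or linear measure has finite logarithmic or linear measure). I expect the main obstacle to be the measure-theoretic accounting for statement (i): one must verify that the bad angle-neighbourhoods around the zeros of $f$ together form a set of linear measure zero, which requires the finite-order hypothesis to pin the zero density $n(r,1/f)$ tightly enough that the series $\sum|a_\mu|^{-2-\eta}$ converges, and that the threshold radius $R_0(\psi_0)$ can be chosen to handle all pairs of $\Gamma$ simultaneously.
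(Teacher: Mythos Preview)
The paper does not prove this lemma: it is quoted verbatim from Gundersen's 1988 paper \cite{log gg} and used as a black box, so there is no ``paper's own proof'' to compare against. Your sketch is a recognisable outline of Gundersen's original argument (differentiated Poisson--Jensen plus excision of zero-neighbourhoods, followed by iteration over the factors $f^{(i+1)}/f^{(i)}$), but as written it contains a real gap.

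The problem is in your treatment of part~(i). You claim that $\sum_\mu |a_\mu|^{-2-\eta}<\infty$ ``by finite order'', but the exponent of convergence of the zero-sequence can be as large as $\rho$, so this series diverges whenever $\rho\ge 2$. Even if one grants convergence, the Borel--Cantelli step only tells you that for almost every ray the lower bound $|z-a_\mu|\ge |a_\mu|^{-1-\eta}$ holds for all but finitely many $\mu$; feeding this into $S(z)=\sum_{|a_\mu|<2r}|z-a_\mu|^{-1}$ and using $n(2r,1/f)\le (2r)^{\rho+\eta}$ gives $S(z)\lesssim r^{\rho+1+2\eta}$, which after division by $R-|z|\asymp r$ produces only the exponent $\rho+\epsilon$ of part~(iii), not the sharper $\rho-1+\epsilon$ required in (i) and (ii). Gundersen's actual proof does not bound the two Poisson--Jensen sums separately; it first combines the Blaschke-type term $\bar a_\mu/(R^2-\bar a_\mu z)$ with $1/(z-a_\mu)$ so that the far zeros contribute a factor $(R^2-|a_\mu|^2)/R^2$, and it chooses $R$ depending on $r$ more carefully than $R=2r$. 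The near-zero contribution is then handled by a Cartan-type covering lemma rather than by fixed-radius discs, which is what makes the angle set genuinely of measure zero for every finite $\rho$. Your obstacle paragraph correctly senses that the measure-theoretic accounting in (i) is the crux, but the specific device you propose does not close it.
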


The following result gives estimates for absolute value of $ A(z)$ over all complex plane except for a negligible set.
\begin{lemma}\label{implem} \cite{banklang}
Let $A(z)=v(z)e^{P(z)}$ be an entire function with $\lambda(A)<\rho(A)=n$, where $P(z)$ is a polynomial of degree $n$. Then for every $\epsilon>0$ there exists $E \subset [0,2\pi)$ of linear measure zero such that

\begin{enumerate}[(i)]

\item for $ \theta \in E^+ \setminus  E $ there exists $ R>1 $ such that
\begin{equation}
|A(re^{\iota \theta})| \geq \exp \left( (1-\epsilon) \delta(P,\theta)r^n \right)
\end{equation}
for $r>R.$

\item for $\theta \in E^-\setminus E$ there exists $R>1$ such that 
\begin{equation}\label{eq2le}
|A(re^{\iota \theta})| \leq \exp \left( (1-\epsilon)\delta(P,\theta) r^n \right) 
\end{equation}
for $r>R.$
\end{enumerate}
\end{lemma}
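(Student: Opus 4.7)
The plan is to split
\begin{equation*}
\log|A(re^{\iota\theta})| = \log|v(re^{\iota\theta})| + \RE P(re^{\iota\theta})
\end{equation*}
and extract the leading behavior from the polynomial piece. Writing $P(z)=a_n z^n+\cdots+a_0$ and computing the real part gives
\begin{equation*}
\RE P(re^{\iota\theta}) = \delta(P,\theta)\,r^n + O(r^{n-1}),
\end{equation*}
uniformly in $\theta\in [0,2\pi)$. Consequently the proof reduces to showing that $\log|v(re^{\iota\theta})| = o(r^n)$ as $r\to\infty$, with this estimate failing only on a set of $\theta$'s of linear measure zero.

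Next, reduce to the situation where $v$ is a canonical product of order strictly less than $n$. Since the zeros of $A$ and $v$ coincide, $\lambda(v)=\lambda(A)<n$; Hadamard factorization writes $v = Q e^{h}$ with $Q$ the canonical product over those zeros (so $\rho(Q)=\lambda(v)<n$) and $h$ a polynomial of degree at most $n$. Because $A = Q\,e^{P+h}$ has order exactly $n$ while $\rho(Q)<n$, $P+h$ must be a polynomial of degree $n$; replacing the given $P$ by $P+h$ and $v$ by $Q$ yields an equivalent factorization in which one may assume $\rho(v)=\lambda(v)<n$ without loss of generality.

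The upper bound $\log|v(re^{\iota\theta})| \le r^{\rho(v)+\epsilon'}$ holds for every $\theta$ and every sufficiently large $r$ directly from the definition of order, and this is $o(r^n)$. For the matching lower bound, invoke the classical minimum-modulus theorem for canonical products of order $<n$ (Cartan/Valiron-type estimates): outside a union of disks centered at the zeros of $v$ with radii of finite sum, one has $\log|v(z)| \ge -|z|^{\rho(v)+\epsilon'}$ for large $|z|$. A radial projection of these exceptional disks onto the circle yields an angular exceptional set $E\subset [0,2\pi)$ of linear measure zero such that, for $\theta\in [0,2\pi)\setminus E$ and $r$ large, $\log|v(re^{\iota\theta})| \ge -r^{\rho(v)+\epsilon'}=o(r^n)$.

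Combining these estimates, for $\theta \in E^+\setminus E$ (so $\delta(P,\theta)>0$),
\begin{equation*}
\log|A(re^{\iota\theta})| \ge \delta(P,\theta)\,r^n - o(r^n) \ge (1-\epsilon)\delta(P,\theta)\,r^n,
\end{equation*}
while for $\theta \in E^-\setminus E$ (so $\delta(P,\theta)<0$),
\begin{equation*}
\log|A(re^{\iota\theta})| \le \delta(P,\theta)\,r^n + o(r^n) \le (1-\epsilon)\delta(P,\theta)\,r^n,
\end{equation*}
giving both asserted bounds. The main obstacle is the lower bound on $|v|$: one must convert the planar exceptional set produced by the minimum-modulus theorem into an \emph{angular} exceptional set of linear measure zero in $[0,2\pi)$. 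This step is the technical heart of the Bank--Langley argument and is where the hypothesis $\lambda(A)<n$ is used essentially, controlling both the density of the zeros of $v$ and the angular thickness of the bad directions.
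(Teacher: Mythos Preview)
The paper does not prove this lemma; it is quoted from Bank--Laine--Langley without argument, so there is no ``paper's proof'' to compare against. Your outline is the standard route and is essentially correct, but the ``without loss of generality'' reduction contains a real slip. Replacing $P$ by $P+h$ alters the leading coefficient whenever $\deg h=n$, hence alters $\delta(P,\theta)$ and the sets $E^{\pm}$; the inequalities you then derive are for $\delta(P+h,\theta)$, not for the $\delta(P,\theta)$ in the statement. Indeed, as literally written (with no hypothesis on $\rho(v)$) the lemma is false: factor $A(z)=e^{z}$ as $v(z)e^{P(z)}$ with $v(z)=e^{-z}$ and $P(z)=2z$; then $n=1$, $\lambda(A)=0<1$, $\delta(P,0)=2$, yet $|A(r)|=e^{r}$ is not $\ge \exp\bigl((1-\epsilon)2r\bigr)$ for small $\epsilon$. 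The intended hypothesis---explicit in Theorem~\ref{jrthm} above and in the original source---is $\rho(v)<n$, under which your Hadamard step is unnecessary and the remainder of your argument goes through.

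Your identification of the angular exceptional set as the technical heart is accurate, and you correctly leave it as the one nontrivial ingredient. One clean execution: take the excluded disks about the zeros $z_{k}$ of $v$ to have radii $|z_{k}|^{-\alpha}$; each subtends an angle $O(|z_{k}|^{-1-\alpha})$ at the origin, and choosing $\alpha>\max(0,\lambda(v)-1)$ makes $\sum_{k}|z_{k}|^{-1-\alpha}<\infty$, so by Borel--Cantelli only a measure-zero set of directions meets infinitely many disks. Finally, adjoin the finitely many critical rays $\{\theta:\delta(P,\theta)=0\}$ to $E$ so that your closing inequalities, which tacitly use strict sign of $\delta(P,\theta)$, are valid on $E^{+}\setminus E$ and $E^{-}\setminus E$.
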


Next lemma is from \cite{besi}and give estimates for an entire function of order less than one.
\begin{lemma}\label{gglemma}
Let $w(z)$ be an entire function of order $\rho$, where $0<\rho<\frac{1}{2}$and let $\epsilon>0$ be a given constant. Then there exists a set $S \subset [0,\infty)$ that has upper logarithmic density at least $1-2\rho$ such that $|w(z)| >\exp (|z|^{\rho-\epsilon})$ for all $z$ satisfying $|z| \in S.$ 
\end{lemma}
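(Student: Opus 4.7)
This is a classical minimum-modulus result in the Wiman--Valiron tradition, refined by Barry and Besicovitch, and my plan is to recover it from the two visible ingredients: the $\cos\pi\rho$ theorem and the definition of order.

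First, I would invoke the $\cos\pi\rho$ theorem of Wiman. For an entire function $w$ of order $\rho\in(0,1/2)$, writing $L(r,w)=\min\{|w(z)|:|z|=r\}$ and $M(r,w)=\max\{|w(z)|:|z|=r\}$, one has
\[
\liminf_{r\to\infty}\ \frac{\log L(r,w)}{\log M(r,w)}\ \geq\ \cos\pi\rho\ >\ 0.
\]
I would then use Barry's refinement, which controls the size of the set of radii on which this inequality is (almost) realised: for every $\alpha\in(\rho,1/2)$ the set
\[
F(\alpha)=\{\,r\geq 1\ :\ \log L(r,w)\ \geq\ (\cos\pi\alpha)\log M(r,w)\,\}
\]
has lower logarithmic density at least $1-\rho/\alpha$. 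Taking $\alpha$ just above $\rho$ so that $1-\rho/\alpha\geq 1-2\rho$ and $c:=\cos\pi\alpha>0$, I obtain a set of $r$ of logarithmic density at least $1-2\rho$ on which $L(r,w)\geq M(r,w)^{c}$.

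Next I would upgrade this relative estimate to the absolute bound required by the lemma. Since $w$ has order exactly $\rho>0$, the definition of order together with the monotonicity of $\log M(r,w)$ in $r$ forces $\log M(r,w)\geq r^{\rho-\eta}$ outside an exceptional set of logarithmic density zero, for any prescribed $\eta>0$. Intersecting this with $F(\alpha)$ yields a set $S$ of upper logarithmic density at least $1-2\rho$ on which, choosing $\eta<\epsilon$,
\[
|w(z)|\ \geq\ L(r,w)\ \geq\ \exp\!\bigl(c\,r^{\rho-\eta}\bigr)\ \geq\ \exp(r^{\rho-\epsilon})
\]
for all $z$ with $|z|=r\in S$ and $r$ sufficiently large (the absorption of the factor $c$ into the exponent is free since $\eta<\epsilon$).

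The main obstacle is the density accounting. The two estimates, namely the $\cos\pi\rho$-comparison between $L(r,w)$ and $M(r,w)$ and the absolute lower bound on $M(r,w)$, are \emph{a priori} controlled on two different sets, and the conclusion requires their intersection still to enjoy upper logarithmic density at least $1-2\rho$. This is precisely the content of Besicovitch's theorem in the cited reference \cite{besi}; verifying that the exceptional set for the lower bound on $M(r,w)$ can indeed be arranged to have logarithmic density zero (as opposed to merely being a proper subset of $[1,\infty)$) is the step I would check most carefully before concluding.
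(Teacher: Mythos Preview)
The paper gives no proof of this lemma; it is quoted from Besicovitch \cite{besi}. Your route via Barry's refinement of the $\cos\pi\rho$ theorem is the standard modern approach, but the parameter choice is inverted: $1-\rho/\alpha\ge 1-2\rho$ is equivalent to $\alpha\ge\tfrac12$, so taking $\alpha$ ``just above $\rho$'' produces a density near $0$, not near $1-2\rho$. What you actually need is $\alpha\nearrow\tfrac12$, which gives lower logarithmic density $1-\rho/\alpha\nearrow 1-2\rho$ with $c=\cos\pi\alpha>0$; at $\alpha=\tfrac12$ the constant $c$ vanishes and the comparison $L\ge M^{c}$ becomes vacuous. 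So Barry's theorem alone yields density $1-2\rho-\delta$ for every $\delta>0$, but not the endpoint value.

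The second gap is the assertion that $\log M(r,w)\ge r^{\rho-\eta}$ off a set of logarithmic density zero. The order is only a $\limsup$, and when the lower order of $w$ is strictly less than $\rho$ this fails: monotonicity of $\log M(r,w)$ merely propagates the bound from a good $r_0$ to an interval $[r_0,r_0^{1+c}]$, which gives the good set positive upper density but does not force the bad set to have density zero. Consequently the intersection with $F(\alpha)$ need not have upper logarithmic density $1-2\rho$. You flag this yourself, but then appeal to \cite{besi} to close it, which is circular. For every application in the present paper only \emph{some} positive upper logarithmic density is actually used, and your argument (with $\alpha$ corrected to lie near $\tfrac12$) does supply that; to obtain the exact constant $1-2\rho$ one must return to Besicovitch's direct Hadamard-product estimate, which controls the minimum and maximum modulus simultaneously rather than in two separate steps.
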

The following lemma is from \cite{lainebook}.
\begin{lemma}\label{lainebook}
Let $g: (0,\infty) \rightarrow \mathbb{R}$, $h: (0,\infty)\rightarrow \mathbb{R}$ be monotone increasing
functions such that $g(r) < h (r)$ outside of an exceptional set $E$ of finite logarithmic
measure. Then, for any $\alpha > 1$, there exists $r_0 > 0$ such that $g(r) < h(\alpha r)$ holds for all $r > r_0$.
\end{lemma}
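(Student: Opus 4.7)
The plan is to exploit the monotonicity of $g$ and $h$ together with the fact that $E$ has finite logarithmic measure, by finding, for each sufficiently large $r$, a point $s\in[r,\alpha r]$ that lies outside $E$ and then sandwiching: $g(r)\le g(s) < h(s)\le h(\alpha r)$.

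The first step is the key measure-theoretic observation: for $r$ large enough, the interval $[r,\alpha r]$ is not entirely contained in $E$. The reason is that the logarithmic measure of $[r,\alpha r]$ is exactly $\int_r^{\alpha r} dt/t = \log\alpha > 0$, so if $[r,\alpha r]\subset E$ held along a sequence $r_n\to\infty$, we could pass to a subsequence (say by choosing $r_{n+1}>\alpha r_n$) along which the intervals $[r_n,\alpha r_n]$ are pairwise disjoint, whence
\[
m_1(E)\ \ge\ \sum_{n}\int_{r_n}^{\alpha r_n}\frac{dt}{t}\ =\ \sum_n\log\alpha\ =\ +\infty,
\]
contradicting the hypothesis that $m_1(E)<\infty$. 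Therefore there exists $r_0>0$ such that for every $r>r_0$ the interval $[r,\alpha r]$ meets the complement of $E$.

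In the second step, for any such $r>r_0$, pick $s\in[r,\alpha r]\setminus E$. Since $s\notin E$, the hypothesis gives $g(s)<h(s)$. Since $g$ and $h$ are monotone increasing and $r\le s\le \alpha r$, we have $g(r)\le g(s)$ and $h(s)\le h(\alpha r)$. Chaining these yields $g(r)\le g(s)<h(s)\le h(\alpha r)$, which is the desired inequality.

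The whole argument is elementary; the only mildly delicate point is the disjointification of the intervals $[r_n,\alpha r_n]$ used to derive the contradiction, but this is immediate by choosing the subsequence greedily so that each new interval starts beyond the previous one. No additional machinery from Nevanlinna theory is needed, and the lemma as stated is a standard device for removing exceptional sets of finite logarithmic measure at the cost of an arbitrarily small multiplicative increase in the argument.
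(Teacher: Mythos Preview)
The paper does not supply a proof of this lemma; it is simply quoted from Laine's textbook \cite{lainebook}. Your argument is correct and is in fact the standard one: find a point $s\in[r,\alpha r]\setminus E$ and sandwich using monotonicity. The first step can be shortened slightly by observing directly that, since $m_1(E)<\infty$, there is $r_0$ with $m_1\bigl(E\cap[r_0,\infty)\bigr)<\log\alpha$; then for every $r>r_0$ the interval $[r,\alpha r]$, having logarithmic measure exactly $\log\alpha$, cannot be contained in $E$. This avoids the disjointification of intervals, but your version is equally valid.
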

Next lemma give property of an entire function with Fabry gap and can be found in \cite{jlongfab}, \cite{zhe}. 
\begin{lemma}\label{fablemma}
Let $g(z)=\sum_{n=0}^{\infty} a_{\lambda_n}z^{\lambda_n}$ be an entire function of finite order with Fabry gap, and $h(z)$ be an entire function with $\rho(h)=\sigma \in (0,\infty)$. Then for any given $\epsilon\in (0,\sigma)$, there exists a set $H\subset (1,+\infty)$ satisfying $ \overline{log dense} H \geq \xi $, where $\xi\in (0,1)$ is a constant  such that for all $|z| =r \in H$, one has
$$ \log M(r,h) > r^{\sigma-\epsilon}, \quad \log m(r,g) > (1-\xi)\log M(r,g),$$

where $M(r,h)=\max \{\ |h(z)|: |z|=r\}\ $, $m(r,g)=\min \{\ |g(z)|: |z|=r\}\ $ and $M(r,g)= \max \{\ |g(z)|: |z|=r\}\ $.
\end{lemma}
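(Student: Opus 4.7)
The plan is to build $H$ as the intersection of two auxiliary sets: one on which $h$ is large (coming from $\rho(h)=\sigma$) and one on which the Fabry gap structure forces $m(r,g)$ to be almost as large as $M(r,g)$. The first piece is a routine consequence of the definition of order; the second is the classical nontrivial ingredient and is the main obstacle.

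First, set $H_1 := \{r>1 : \log M(r,h) > r^{\sigma-\epsilon}\}$. From $\limsup_{r\to\infty}\frac{\log\log M(r,h)}{\log r}=\sigma$, there is a sequence $r_k\to\infty$ with $\log M(r_k,h) > r_k^{\sigma-\epsilon/2}$. Since $r\mapsto\log M(r,h)$ is increasing, the inequality $\log M(r,h) > r^{\sigma-\epsilon}$ propagates throughout the interval $I_k := [r_k, r_k^{\alpha}]$ with $\alpha:=(\sigma-\epsilon/2)/(\sigma-\epsilon) > 1$, because on $I_k$ one has $\log M(r,h)\geq r_k^{\sigma-\epsilon/2}\geq r^{\sigma-\epsilon}$. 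Since $I_k$ has logarithmic measure $(\alpha-1)\log r_k$ sitting inside $[1,r_k^{\alpha}]$ of logarithmic measure $\alpha\log r_k$, a direct computation yields $\overline{\log\,\mathrm{dens}}(H_1)\geq (\alpha-1)/\alpha =: \eta_1 > 0$.

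Next, I would invoke the classical Fabry gap comparison: for every $\zeta\in(0,1)$ there exists a set $H_2(\zeta)\subset(1,\infty)$ with $\underline{\log\,\mathrm{dens}}(H_2(\zeta))\geq 1-c(\zeta)$, where $c(\zeta)\downarrow 0$ as $\zeta\downarrow 0$, on which $\log m(r,g) > (1-\zeta)\log M(r,g)$. This is the crux of the lemma and rests on a Wiman--Valiron/central-index argument: the Fabry condition $\lambda_n/n\to\infty$ forces a single dominant term in the power series that pins $|g(z)|$ uniformly close to $M(r,g)$ on $|z|=r$, outside a thin exceptional set. I would cite \cite{jlongfab} and \cite{zhe} for this piece rather than reproducing it.

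Finally, choose $\zeta$ so small that $\zeta<\eta_1/2$ and $c(\zeta)<\eta_1/2$, and set $H := H_1\cap H_2(\zeta)$. The elementary inequality $\overline{\log\,\mathrm{dens}}(H)\geq \overline{\log\,\mathrm{dens}}(H_1) - \bigl(1-\underline{\log\,\mathrm{dens}}(H_2(\zeta))\bigr)$ gives $\overline{\log\,\mathrm{dens}}(H)\geq \eta_1 - c(\zeta) > \eta_1/2$. Picking any $\xi\in[\zeta,\eta_1/2]$, both conditions $\overline{\log\,\mathrm{dens}}(H)\geq\xi$ and $1-\xi\leq 1-\zeta$ hold simultaneously, so on this $H$ both desired estimates $\log M(r,h)>r^{\sigma-\epsilon}$ and $\log m(r,g)>(1-\xi)\log M(r,g)$ are valid. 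The only genuine difficulty is the Fabry gap comparison in Step 2; everything else is elementary bookkeeping with logarithmic densities.
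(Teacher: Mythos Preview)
The paper does not actually prove this lemma: it is quoted verbatim from the references \cite{jlongfab} and \cite{zhe}, with no argument supplied. So there is no ``paper's own proof'' to compare against.

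Your sketch goes further than the paper does, and the outline is sound. Step~1 (constructing $H_1$ via monotonicity of $\log M(r,h)$ and the interval-propagation trick) is correct and gives $\overline{\log\,\mathrm{dens}}(H_1)\ge(\alpha-1)/\alpha>0$ as you claim. Step~3 is also correct: the density inequality
\[
\overline{\log\,\mathrm{dens}}(H_1\cap H_2)\ \ge\ \overline{\log\,\mathrm{dens}}(H_1)-\bigl(1-\underline{\log\,\mathrm{dens}}(H_2)\bigr)
\]
follows from $\limsup(a_r-b_r)\ge\limsup a_r-\limsup b_r$ applied to the counting functions, and your choice of $\xi\in[\zeta,\eta_1/2]$ makes both conclusions hold simultaneously.

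The only substantive point is Step~2, where you assume a quantitative Fabry-gap comparison with $\underline{\log\,\mathrm{dens}}(H_2(\zeta))\ge 1-c(\zeta)$ and $c(\zeta)\downarrow 0$. You defer this to \cite{jlongfab} and \cite{zhe}, which is exactly what the paper does for the entire lemma. In fact the classical Fuchs-type theorems for Fabry gap series typically give the stronger conclusion that $\log m(r,g)\sim\log M(r,g)$ outside a set of logarithmic density zero, so for each fixed $\zeta$ one can take $H_2(\zeta)$ of lower logarithmic density~$1$ (i.e.\ $c(\zeta)=0$), which only makes your intersection argument easier. There is no genuine gap here beyond verifying that the cited sources deliver at least the weak form you invoke.
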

The following remark follows from the above lemma.
\begin{remark} \label{fabremark}
Suppsoe that $g(z)=\sum_{n=0}^{\infty} a_{\lambda_n}z^{\lambda_n}$ be an entire function of order $\sigma \in (0,\infty)$ with Fabry gaps then for any given $\epsilon >0, \quad (0<2\epsilon <\sigma)$, there exists a set $H\subset (1,+\infty)$ satisfying $\overline{\log dense}H \geq \xi$, where $\xi \in (0,1)$ is a constant such that for all $|z| =r \in H$ , one has
$$ |g(z)|> M(r,g)^{(1-\xi)}> \exp{\left((1-\xi) r^{\sigma-\epsilon}\right)}>\exp{\left(r^{\sigma-2\epsilon}\right)}.$$
\end{remark}
Next lemma can be found in \cite{lainebook} and can be proved by induction. 
\begin{lemma}\label{ind}
Let $h(z)$ and $Q(z)$ be entire functions and define $f=he^{Q}$. Then $f^{(p)}$ may be represented, for each $p\in \mathbb{N}$, in the form
\begin{equation}\notag
f^{(p)}= \left( h^{(p)}+pQ'h^{(p-1)}+ \sum_{j=2}^{p} \left( {p \choose j} (Q')^j+H_{j-1}(Q') \right) h^{(p-j)} \right)e^Q
\end{equation} 
where $H_{j-1}(Q')$ stands for a differential polynomial of total degree $\leq j-1$ in $Q'$ and its derivatives, with constants coefficients.
\end{lemma}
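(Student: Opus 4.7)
The plan is to proceed by induction on $p$. The base case $p=1$ is immediate from the product rule: $f' = (h' + Q'h)e^Q$, which matches the stated formula since the sum $\sum_{j=2}^{1}$ is empty.

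For the inductive step, write $f^{(p)} = P_p(h)\,e^Q$ where $P_p(h) = \sum_{j=0}^{p} c_{j,p}\, h^{(p-j)}$, with $c_{0,p}=1$, $c_{1,p}=pQ'$, and $c_{j,p} = \binom{p}{j}(Q')^j + H_{j-1}(Q')$ for $2 \leq j \leq p$ (here $H_{j-1}(Q')$ is a differential polynomial of total degree at most $j-1$ in $Q', Q'', \ldots$). Differentiating gives $f^{(p+1)} = \bigl(P_p(h)' + Q'\, P_p(h)\bigr)e^Q$, and a reindexing by the order of derivative of $h$ shows that the new coefficients obey $c_{0,p+1}=1$, $c_{k,p+1} = c_{k,p} + c_{k-1,p}' + Q'\, c_{k-1,p}$ for $1 \leq k \leq p$, and $c_{p+1,p+1} = c_{p,p}' + Q'\, c_{p,p}$.

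The crux is that the top-degree-in-$Q'$ part of $c_{k,p+1}$ is exactly $\binom{p+1}{k}(Q')^k$: it collects $\binom{p}{k}(Q')^k$ from $c_{k,p}$ together with $\binom{p}{k-1}(Q')^k$ from $Q'\, c_{k-1,p}$, and Pascal's identity $\binom{p}{k} + \binom{p}{k-1} = \binom{p+1}{k}$ delivers the correct coefficient; for $k=1$ this specializes to $c_{1,p+1} = pQ' + Q' = (p+1)Q'$. Everything else, namely the inherited $H_{k-1}(Q')$ from $c_{k,p}$, the term $\binom{p}{k-1}(k-1)(Q')^{k-2}Q''$ produced when differentiating $(Q')^{k-1}$, the differentiated remainder $H_{k-2}(Q')'$, and the product $Q'\, H_{k-2}(Q')$, is to be absorbed into a new $H_{k-1}(Q')$.

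The one point that needs care, and is really the main (modest) obstacle, is a degree count: each leftover piece must have total degree at most $k-1$ in $Q', Q'', \ldots$. This follows from three simple observations: differentiating a monomial $Q^{(a_1)}\cdots Q^{(a_m)}$ produces by the product rule a sum of monomials still of length $m$, so $d/dz$ preserves total degree; multiplication by $Q'$ raises total degree by exactly one; and $\binom{p}{k-1}(k-1)(Q')^{k-2}Q''$ has total degree $(k-2)+1=k-1$ by inspection. Combining these facts bounds the new error term by $k-1$ and closes the induction.
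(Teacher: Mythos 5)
Your induction is correct: the recursion $c_{k,p+1}=c_{k,p}+c_{k-1,p}'+Q'c_{k-1,p}$, the Pascal-identity step isolating $\binom{p+1}{k}(Q')^k$, and the degree bookkeeping (differentiation preserves total degree, multiplication by $Q'$ raises it by one) are exactly what is needed. The paper offers no proof of this lemma beyond citing Laine's book and remarking that it ``can be proved by induction,'' so your argument supplies precisely the induction the paper alludes to.
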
 
We are now able to prove our main result. 
\section{Proof of Theorem \ref{Main}}
This section contains the proof of Theorem [\ref{Main}], which is as follows:
\begin{proof}

If $\rho(A)< \rho(B)$ then by Theorem [\ref{thm1}], all non-trivial solutions $f$ of the equation (\ref{sde}) are of infinite order.  Thus we consider that $\rho(B)\leq\rho(A)< \infty$.

Let us suppose that there exists a non-trivial solution $f$ of the equation (\ref{sde}) such that $\rho(f)<\infty$. Then by Lemma [\ref{gglog}], there exists a set $E_1 \subset[0,2\pi)$ that has linear measure zero, such that if $\psi_0 \in [0,2\pi) \setminus E_1, $ then there is a constant $R_0=R_0(\psi_0)>0$  so that for all $z$ satisfying $\arg z =\psi_0$ and $|z| \geq R_0$, we have

\begin{equation} \label{guneq1}
|f^{(k)}(z)/f(z)|\leq |z| ^{2\rho(f)}, \quad k=1, 2
\end{equation} 
\begin{enumerate}
\item Let $B(z)$ be a transcendental entire function with $\rho(B)\neq \rho(A)$. In this case we need to consider $\rho(B)<\rho(A)$. We consider the following cases on $\rho(B)$.
\begin{enumerate}[(a)]
 \item Suppose that $0<\rho(B)\leq \frac{1}{2}$. Then from Lemma [\ref{gglemma}], there exists a set $S \subset [0,\infty)$ that has upper logarithmic density at least $1-2\rho(B)$ such that 

\begin{equation}\label{eqB}
|B(z)|>\exp(|z| ^{\rho(B)-\epsilon})
\end{equation}
for all $z$, satisfying $|z|\in S.$ 
From equation (\ref{sde}), (\ref{eq2le}), (\ref{guneq1})and (\ref{eqB}), for all $z$, satisfying $\arg z=\psi_0 \in E^- \setminus (E\cup E_1) $and $|z|=r \in S$, $|z|=r >R_0(\psi_0)$ we have

\begin{align*}
\exp{(r ^{\rho(B)-\epsilon})} &< |B(z)| \\
&\leq |f''(z)/f(z)|+ |A(z)| |f'(z)/f(z)| \\
 &\leq  r^{2\rho(f)}(1+o(1))
\end{align*} 
which is a contradiction for arbitrary large $r$.
\item When $\frac{1}{2}\leq \rho(B)< \infty $ then using Phragm$\acute{e}$n- Lindel$\ddot{o}$f principle, there exists a sector $\Omega(\alpha, \beta); \quad 0\leq \alpha<\beta \leq 2\pi$ with $\beta-\alpha \geq \frac{\pi}{\rho(B)}$  such that 

\begin{equation}\label{Border}
\limsup_{r\rightarrow \infty} \frac{\log^+ \log^+ |B(re^{\iota \theta})|}{\log r} =\rho(B)
\end{equation}
for all $\theta \in \Omega(\alpha, \beta)$. Since $\rho(B) < \rho(A)$ this implies that there exists $\theta_0 \in \Omega(\alpha, \beta) \cap \left(E^- \setminus E  \right)$. Thus from equation (\ref{eq2le}) and (\ref{Border}), for $\arg z =\theta_0$ we have,

\begin{equation}\label{eqAle}
|A(re^{\iota \theta_0})| \leq \exp{\left( (1-\epsilon) \delta(P,\theta_0) r^n\right)}
\end{equation}
and
\begin{equation}\label{eqB1}
\exp{\left(r^{\rho(B)-\epsilon}\right)} \leq |B(re^{\iota \theta_0})|
\end{equation}
for sufficiently large $r$. Now from equations (\ref{sde}), (\ref{guneq1}), (\ref{eqAle})and (\ref{eqB1}), for all $z=re^{\iota \theta_0}$, satisfying $\theta_0\in \Omega(\alpha, \beta)\cap E^- \setminus(E\cup E_1)$ and $|z|=r > R_0(\theta_0) $ we have,
\begin{align*}
\exp{(r ^{\rho(B)-\epsilon})} &< |B(z)| \\
&\leq |f''(z)/f(z)| + |A(z)| | f'(z)/f(z)|\\
 &\leq  r^{2\rho(f)}(1+o(1))
\end{align*} 
which is a contradiction for arbitrary large $r$.

\item Now suppose that $B(z)$ is a transcendental entire function with $\rho(B)=0$, then using a result from \cite{pd}, for all $\theta \in [0,2\pi)$ one has,
\begin{equation}\label{eqB2}
\limsup_{r\rightarrow \infty} \frac{\log |B(re^{\iota \theta})|}{\log r} =\infty
\end{equation} 
this implies that for any large $G>0$ there exists $R(G)>0$ such that 
\begin{equation}\label{eqB3}
r^G \leq |B(re^{\iota \theta})|
\end{equation}
for all $\theta \in [0,2\pi)$ and for all $r>R(G)$. From equations (\ref{sde}), (\ref{eq2le}), (\ref{guneq1})and (\ref{eqB3}), for all $z=re^{\iota \theta}$ satisfying $\arg z=\theta \in E^-\setminus \left( E \cup E_1  \right )$ and $|z| =r >R$ we have,
\begin{align*}
r ^G &< |B(z)| \\
&\leq |f''(z)/f(z)|+ |A(z)| |f'(z)/f(z)| \\
 &\leq  r^{2\rho(f)}(1+o(1))
\end{align*} 
which is a contradiction for arbitrary large $r$.
\end{enumerate}
Thus all non-trivial solutions of the equation (\ref{sde}) are of infinite order in this case.
\item Let $B(z)$ be a transcendental entire function with Fabry gap. Then from Lemma (\ref{fablemma}), for any given $\epsilon >0, \quad (0<2\epsilon <\rho(B))$, there exists a set $H\subset (1,+\infty)$ satisfying $\overline{\log dense}H \geq \xi$, where $\xi \in (0,1)$ is a constant such that for all $|z| =r \in H$, one has
\begin{equation}\label{eq2B}
|B(z)| >\exp{\left(r^{\rho(B)-2\epsilon}\right)}
\end{equation}
From equation (\ref{sde}), (\ref{eq2le}), (\ref{guneq1})and (\ref{eq2B}), for all $z$ satisfying $\arg z = \psi_0 \in E^- \setminus (E\cup E_1)$and $|z|=r \in H, \quad  r >R_0(\psi_0) $, we have
\begin{align*}
\exp{\left( r^{\rho(B)-2\epsilon}\right)}&< |B(z)| \leq |f''(z)/f(z)| + |A(z)| |f'(z)/f(z)| \\
 &\leq  r^{2\rho(f)}(1+o(1))
\end{align*}
which is a contradiction for arbitrary large $r$.
\end{enumerate}
We thus conclude that all non-trivial solutions of the equation (\ref{sde}) are of infinite order.

Now let us suppose that $f(z)=h(z)e^{Q(z)}$, where $h(z)$and $Q(z)$ are entire functions, be a non-trivial solution of the equation (\ref{sde}) and hence $\rho(f)=\infty$. 

First we suppose that $\lambda(f)=\rho(h)< \rho(f)$. From equation (\ref{sde}), we have
\begin{equation}
h''+ \left( A(z)+2 Q'(z) \right) h+ \left( B(z) +Q''(z) +(Q')^2(z) \right)=0
\end{equation}
which implies that $\rho(h)\geq \max \{\ \rho(A), \rho(B), \rho(Q) \}\ >0$. As a consequence of this, $f$ contains infinite number of zeros.

If we suppose that $\lambda(f)=\rho(f)=\infty$ then it is clear that $f$ has infinite number of zeros.
\end{proof}\section{Further Results}
In this section we will extend our result to higher order linear differential equations. We consider the higher order linear differential equation as follows:
\begin{equation}\label{sde1}
f^{(m)}+A_{(m-1)}(z)f^{(m-1)}+\ldots +A_1(z)f'+A_0(z)f=0 
\end{equation}
where $m\geq 2$ and $A_0, A_1, \ldots, A_{(m-1)}$ are entire functions. Then it is well known that  all solutions of the equation (\ref{sde1}) are entire functions. Moreover, if $A_0, A_1, \ldots, A_{(m-1)}$ are polynomials then  all solutions of the equation (\ref{sde1}) are of finite orde and vice-versa \cite{lainebook}. Therefore, if any of the coefficient is a transcendental entire function then equation (\ref{sde1}) will possesses almost all solutions of infinite order. However, conditions on coefficients of the equation (\ref{sde1}) are found so that all solutions are of infinite order \cite{lainebook}. Here, we are giving one of such condition on the coefficients of the equation (\ref{sde1}).
\begin{thm}
Suppose there exist an integer $j \in \{\ 1,2, \ldots , m-1 \}\ $ such that $ \lambda(A_j)<\rho(A_j)$. Suppose that $A_0$ be a transcendental entire function satisfying $ \rho(A_i) <\rho(A_0)$ where $i=1,2,\ldots m-1, i\neq j$ with 
\begin{enumerate}
\item $\rho(A_0)\neq \rho(A_j)$ or
\item $A_0(z)$ be a transcendental entire function with Fabry gap.
\end{enumerate}

Then every non-trivial solution of the equation (\ref{sde1}) is of infinite order. In addition, all non-trivial solutions of the equation (\ref{sde1}) has infinite number of zeros.
\end{thm}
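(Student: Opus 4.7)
The plan is to mimic the contradiction strategy of Theorem \ref{Main}. Assume $f$ is a non-trivial solution of (\ref{sde1}) with $\rho(f)<\infty$. Applying Lemma \ref{gglog}(i) with $\Gamma=\{(k,0):1\le k\le m\}$ yields a set $E_1\subset[0,2\pi)$ of linear measure zero so that, for every $\psi_0\in[0,2\pi)\setminus E_1$ and $|z|=r$ large,
$$\bigl|f^{(k)}(z)/f(z)\bigr|\le r^{2\rho(f)},\qquad k=1,2,\ldots,m.$$
Since $\lambda(A_j)<\rho(A_j)=:n$, Hadamard's factorization theorem forces $n$ to be a positive integer and writes $A_j=v(z)e^{P(z)}$ with $\deg P=n$ and $\rho(v)<n$. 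Lemma \ref{implem}(ii) then yields a linear-measure-zero set $E\subset[0,2\pi)$ with $|A_j(re^{i\theta})|\le\exp\bigl((1-\epsilon)\delta(P,\theta)r^n\bigr)$ for $\theta\in E^-\setminus E$ and large $r$; when $\delta(P,\theta)<0$, this bound decays exponentially.

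Solving (\ref{sde1}) for $A_0(z)$ gives
$$|A_0(z)|\le\biggl|\frac{f^{(m)}}{f}\biggr|+\sum_{i=1}^{m-1}|A_i(z)|\biggl|\frac{f^{(i)}}{f}\biggr|.$$
Fix $\eta>0$ strictly smaller than $\rho(A_0)-\max\{\rho(A_i):i\ne 0,j\}$; standard growth bounds then give $|A_i(z)|\le\exp(r^{\rho(A_0)-\eta})$ for every $\theta$, every such $i$, and $r$ large. The contradiction is obtained by selecting a ray $\arg z=\theta_0\in E^-\setminus(E\cup E_1)$ with $\delta(P,\theta_0)<0$ along which $|A_0(re^{i\theta_0})|>\exp(r^{\rho(A_0)-\eta/2})$ for arbitrarily large $r$, because then
$$\exp(r^{\rho(A_0)-\eta/2})<|A_0(re^{i\theta_0})|\le r^{2\rho(f)}\bigl(2+(m-2)\exp(r^{\rho(A_0)-\eta})\bigr),$$
which fails for $r$ large enough. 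The required lower bound on $|A_0|$ in case (1) is produced exactly as in the three subcases of the proof of Theorem \ref{Main}: (a) Lemma \ref{gglemma} if $0<\rho(A_0)\le 1/2$; (b) the Phragm\'en--Lindel\"of principle in a sector of opening $\pi/\rho(A_0)>\pi/n$ if $1/2\le\rho(A_0)<n$ (this sector must meet some component of $E^-$); (c) the pointwise estimate from \cite{pd} if $\rho(A_0)=0$. The remaining sub-case $\rho(A_0)>n$ is peeled off separately: then $A_0$ strictly dominates every $A_i$ with $i\ge 1$, so a classical dominant-coefficient theorem for (\ref{sde1}) already forces every non-trivial solution to have infinite order and no ray selection is needed. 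In case (2), Remark \ref{fabremark} (with its $\epsilon$ chosen so that $2\epsilon<\eta/2$) furnishes a set $H\subset(1,\infty)$ of positive upper logarithmic density on which $|A_0(z)|>\exp(r^{\rho(A_0)-\eta/2})$ uniformly in $\theta$, and any $\theta_0\in E^-\setminus(E\cup E_1)$ together with $r\in H$ completes the contradiction.

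For the infinite-zero claim, I would copy the end of the proof of Theorem \ref{Main}: if $\lambda(f)=\infty$ there is nothing to prove, otherwise factor $f=h(z)e^{Q(z)}$ with $\rho(h)=\lambda(f)<\infty$ and $Q$ entire, apply Lemma \ref{ind} to expand each $f^{(p)}$ in powers of $Q'$, substitute into (\ref{sde1}), and divide through by $e^Q$. The resulting differential identity among $h,h',\ldots,h^{(m)},Q',Q'',\ldots$ and the $A_i$ forces $\rho(h)\ge\max_i\rho(A_i)\ge\rho(A_0)>0$ together with $\rho(h)\ge\rho(Q)$; combined with the already-established $\rho(f)=\infty$ this is only compatible with $\rho(h)=\infty$, i.e.\ $\lambda(f)=\infty$.

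The step I expect to need the most care is the sub-case $\rho(A_0)>n$ of case (1)(b): the Phragm\'en--Lindel\"of sector of $A_0$ then has angular opening $\pi/\rho(A_0)<\pi/n$ and is not guaranteed to meet any component of $E^-$, so the otherwise uniform ``pick a single ray'' strategy breaks down. Peeling this sub-case off via a dominant-coefficient theorem for (\ref{sde1}) is the cleanest fix; otherwise one would have to push a Borel-direction argument showing that an entire function of order $\rho$ has full-order directions densely enough to intersect $E^-$. The remaining subcases and the Fabry-gap case follow the proof of Theorem \ref{Main} essentially verbatim.
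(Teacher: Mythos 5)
Your proposal is correct and follows essentially the same route as the paper: ray-wise lower bounds on $|A_0|$ (Besicovitch/Phragm\'en--Lindel\"of/Barry in case (1), the Fabry-gap remark in case (2)), exponential decay of $A_j$ on $E^-$ from Lemma \ref{implem}, polynomial logarithmic-derivative bounds from Lemma \ref{gglog}, and the $f=he^{Q}$ factorization via Lemma \ref{ind} for the statement about zeros. The only structural difference is the sub-case $\rho(A_j)<\rho(A_0)$, which you correctly identify as needing separate treatment and delegate to a ``classical dominant-coefficient theorem''; the paper proves exactly this case directly via the short Nevanlinna-characteristic comparison $T(r,A_0)\le m\rho(f)\log r+(m-1)\max_{i}T(r,A_i)+O(1)$ obtained from Lemma \ref{gglog}(ii) together with Lemma \ref{lainebook}.
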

\begin{proof}

First let us suppose that $\rho(A_j)<\rho(A_0)$. Then suppose that there exist a solution $f \not \equiv 0$ of the equation (\ref{sde1}) such that $\rho(f) < \infty$, then by Lemma [\ref{gglog}] (ii) there exists a set $ E_2 \subset (1,\infty)$ that has finite logarithmic measure, such that for all $z$ satisfying $|z|\not \in E_2\cup[0,1]$  such that
\begin{equation} \label{guneqq}
|f^{(k)}(z)/f(z)| \leq |z|^{m \rho(f)}
\end{equation}
where $k=1,2,\ldots, m$. Using equation (\ref{sde1}) and (\ref{guneqq}), we have
\begin{align*}
|A_0(z)|&\leq \big|\frac{f^{(m)}(z)}{f(z)}\big| +|A_{(m-1)}(z)|\big|\frac{f^{(m-1)}(z)}{f(z)}\big|+\ldots +|A_1(z)|\big|\frac{f'(z)}{f(z)}\big| \\
&\leq |z|^{m\rho(f)} \{\ 1+|A_{(m-1)}(z)|+\dots +|A_1(z)| \}\
\end{align*}
for all $z$ satisfying $|z| \not \in E_2\cup[0,1]$. From here we get that 
\begin{equation}
T(r,A_0)\leq m\rho(f) \log r+(m-1) T(r,A_i) +O(1)
\end{equation}
where $T(r,A_i)=\max \{\ T(r,A_k): k=1,2,\ldots, m-1\}\ $ and $|z|=r \not \in E_2\cup[0,1]$. Using Lemma [\ref{lainebook}], this implies that $\rho(A_0) \leq \rho(A_i)$, which is a contradiction. Thus all non-trivial solutions of the equation (\ref{sde1}) are of infinite order in this case.

Now consider $\rho(A_0) \leq \rho(A_j)$ and there exists a non-trivial solution $f$ of finite order then by Lemma [\ref{gglog}] (i), there exists a set $E_1 \subset [0,2 \pi)$ with linear measure zero such that if $\psi_0 \in [0,2\pi)\setminus E_1, $ then there is a constant $R_0=R_0(\psi_0)>0$  so that for all $z$ satisfying $\arg z =\psi_0$ and $|z| \geq R_0$ we have
\begin{equation} \label{guneqqq}
|f^{(k)}(z)/f(z)| \leq |z|^{m\rho(f)} \qquad k=1,2, \ldots, m-1
\end{equation}  
Since $\rho(A_i)<\rho(A_0)$, for all $i=1,2, \ldots, m-1, i\neq j$ then for any constant $\eta>0$ such that \\
$ \max \{\ \rho(A_i): i=1,2, \ldots m-1, i\neq j \}\ <\eta< \rho(A_0)$ there exists $R_0>0$ such that 
\begin{equation}\label{Aieq}
|A_i(z)|\leq \exp{|z|^\eta}
\end{equation}
where $i=1,2,\ldots, m-1, i\neq j$ and $|z|=r >R_0$. \\
Also $\lambda(A_j)<\rho(A_j)=n$ then $A_j(z)=v(z)e^{P(z)}$, where $v(z)$ is an entire function and $P(z)$ is a polynomial of degree $n$. 
\begin{enumerate}
\item Let $A_0(z)$ be a transcendental entire function with $\rho(A_0)\neq \rho(A_j)$. In this case we need to consider that $\rho(A_0)<\rho(A_j)$. We will discuss following three cases:
\begin{enumerate}[(a)]
\item\label{casea} suppose $0<\rho(A_0)<\frac{1}{2}$ then by Lemma [\ref{gglemma}], for $0<\epsilon< (\rho(A_0)-\eta)$ there exists a set $S \subset [0,\infty)$ that has upper logarithmic density at least $1-2\rho(A_0)$ such that 
\begin{equation}\label{A0eq}
|A_0(z)| >\exp (|z|^{\rho(A_0)-\epsilon})
\end{equation} for all $z$ satisfying $|z| \in S.$ Now using equation (\ref{eq2le}), (\ref{sde1}), (\ref{guneqqq}), (\ref{Aieq})and (\ref{A0eq}) we have
\begin{align*}
\qquad \qquad \exp{ (|z|^{\rho(A_0)-\epsilon})} &<|A_0(z)| \\
&\leq \big|\frac{f^{(m)}(z)}{f(z)}\big| +|A_{(m-1)}(z)|\big|\frac{f^{(m-1)}(z)}{f(z)}\big| \\
&+\ldots +|A_1(z)|\big|\frac{f'(z)}{f(z)}\big| \\
&\leq r^{m\rho(f)} \{\ 1+ \exp{r^\eta}\\
&+ \ldots + \exp \left( (1-\epsilon) \delta(P,\psi_0)r^n \right)+ \ldots +  \exp{r^\eta} \}\ \\
&= r^{m\rho(f)}\{\ 1+ (m-2) \exp{r^\eta} + o(1)\}\ 
\end{align*}
for all $z$ satisfying $|z|=r \in S$ and $\arg z =\psi_0 \in E^- \setminus (E\cup E_1)$. From here we will get contradiction for sufficiently large $r$.
\item Now suppose that $\rho(A_0) \geq \frac{1}{2}$, then using Phragm$\acute{e}$n- Lindel$\ddot{o}$f principle, there exists a sector $\Omega(\alpha, \beta); \quad 0\leq \alpha<\beta \leq 2\pi$ with $\beta-\alpha \geq \frac{\pi}{\rho(A_0)}$  such that 

\begin{equation}\label{A0order}
\limsup_{r\rightarrow \infty} \frac{\log^+ \log^+ |A_0(re^{\iota \theta})|}{\log r} =\rho(A_0)
\end{equation}
for all $\theta \in \Omega(\alpha, \beta)$. Since $\rho(A_0) < \rho(A_j)$ this implies that there exists $\theta_0 \in \Omega(\alpha, \beta) \cap \left(E^- \setminus E  \right)$ such that 
\begin{equation}\label{Ajeq}
|A_j(re^{\iota \theta_0})| \leq \exp{\left( (1-\epsilon)\delta(P,\theta_0)r^n \right)}
\end{equation} 
and form equation (\ref{A0order}), we have
\begin{equation}\label{Aoeq}
|A_0(re^{\iota \theta_0})| \geq \exp{r^{\rho(A_0)-\epsilon}}
\end{equation}
Thus we get contradiction using equation (\ref{sde1}), (\ref{guneqqq}), (\ref{Aieq}), (\ref{Ajeq})and (\ref{Aoeq}) for sufficiently large $r$ by using similar argument as in case (\ref{casea}).
\item Suppose $A_0$ be a transcendental entire function with $\rho(A_0)=0$, then using a result from \cite{pd}, for all $\theta \in [0,2\pi)$ one has,
\begin{equation}\label{eqA02}
\limsup_{r\rightarrow \infty} \frac{\log |A_0(re^{\iota \theta})|}{\log r} =\infty
\end{equation} 
this implies that for any large $G>0$ there exists $R(G)>0$ such that 
\begin{equation}\label{eqA03}
r^G \leq |A_0(re^{\iota \theta})|
\end{equation}
for all $\theta \in [0,2\pi)$ and for all $r>R(G)$. From equations (\ref{eq2le}), (\ref{sde1}), (\ref{guneqqq}), (\ref{Aieq})and (\ref{eqA03}) we get a contradiction for sufficiently large $r$ using similar argument as in case (\ref{casea}).\\
Thus we conclude that all non-trivial solutions of the equation (\ref{sde1}) are of infinite order in this case.
\end{enumerate}
\item Suppose that $A_0(z)$ be a trascendental entire function with Fabry gap then using Lemma [\ref{fablemma}], for any given $\epsilon >0, \quad (0<2\epsilon <\rho(A_0)-\eta)$, there exists a set $H\subset (1,+\infty)$ satisfying $\overline{\log dense}H \geq \xi$, where $\xi \in (0,1)$ is a constant such that for all $|z|=r \in H$ , one has
\begin{equation}\label{eq2A0}
 |B(z)| >\exp{\left(r^{\rho(A_0)-2\epsilon}\right)}
\end{equation}
From equation (\ref{eq2le}), (\ref{sde1}), (\ref{guneqqq}), (\ref{Aieq})and (\ref{eq2A0}), for all $z$ satisfying $\arg z = \psi_0 \in E^- \setminus (E\cup E_1)$and $|z| =r \in H$, $r>R_0(\psi_0) $, we have
\begin{align*}
\qquad \qquad \exp{\left( r^{\rho(A_0)-2\epsilon}\right)}&< |A_0(z)| \\
&\leq  \big|\frac{f^{(m)}(z)}{f(z)}\big| +|A_{(m-1)}(z)|\big|\frac{f^{(m-1)}(z)}{f(z)}\big| \\
&+\ldots +|A_1(z)|\big|\frac{f'(z)}{f(z)}\big| \\
&\leq r^{m\rho(f)} \{\ 1+ \exp{r^\eta} \\
&+ \ldots + \exp \left( (1-\epsilon) \delta(P,\psi_0)r^n \right) \\
&+ \ldots +  \exp{r^\eta} \}\ \\
&= r^{m\rho(f)} \{\ 1+ (m-2) \exp{r^\eta+o(1)} \}\ 
\end{align*}
which is a contradiction for arbitrary large $r$.
\end{enumerate}
Thus all solutions $f\not \equiv 0$ of the equation (\ref{sde1}) are of infinite order.
Suppose that $f(z)=h(z)e^{Q(z)}$, where $h(z)$and $Q(z)$ are entire functions, be a non-trivial solution of the equation (\ref{sde1}) therefore $\rho(f)=\infty$. \\
Let us suppose that $\rho(h)=\lambda(f)<\rho(f)$. Now from equation (\ref{sde1}) and Lemma [\ref{ind}] we get
\begin{equation}\label{indu}
h^{m}+B_{m-1}(z)h^{(m-1)}+\ldots+ B_0(z)h=0 
\end{equation}
where $$ B_{m-1}=A_{m-1}+mQ'$$
\begin{align*}
B_{m-j}&=A_{m-j}+(m-j+1)A_{m-j+1}Q' \\
&+ \sum_{i=2}^{j} \left( {m-j+1 \choose i}(Q')^i+H_{i-1}(Q') \right) A_{m-j+i}.
\end{align*}
where $j=2,3, \ldots, m$ and $A_m \equiv 1$. Equation (\ref{indu}) implies that $\rho(h)\geq \max \{\ \rho(Q), \rho(A_0),  \rho(A_1), \ldots, \rho(A_{m-1} \}\ >0$. Thus $f(z)$ has infinite number of zero.\\
If $\lambda(f)=\rho(f)=\infty$ then also zeros of $f(z)$ are infinite.
\end{proof}


\begin{thebibliography}{}
\bibitem{ame} I. Amemiya and  M. Ozawa, {\it Non-existence of finite order solutions of $w''+e^{-z}w'+Q(z)w=0$}, Comment. Math. Helv.,{\bf{36}} (1962), 1-8.
\bibitem{banklang} S. Bank, I. Laine and J. Langley,  {\it On the frequency of zeros of solutions of second order linear differential equation}, Results Math., {\bf{10}} ( 1986), 8-24. 

\bibitem{pd} P. D. Barry, {\it On a theorem of Besicovitch}, The Quarterly Journal of Mathematics, {\bf{14}} (1963), 293-302.

\bibitem{besi} A. S. Besicovitch, {\it On the integral functions of order $<1$}, Math. Ann., {\bf{97}} (1927), 677-695.

\bibitem{frei} M. Frei, {\it $\ddot{U}$ber die subnormalen L$\ddot{o}$sungen der Differential Gleichung} $w''+e^{-z}w'+(const.)w=0$, Comment. Math. Helv., {\bf{36}} (1962), 1-8. 

\bibitem{ggpol} G. G. Gundersen, {\it On the question of whether $f''+e^{-z}f'+B(z)f=0$ can admit a solution $f\not \equiv 0$ of finite order}, Proc , R. S. E., {\bf{102A}} (1986), 9-17. 

\bibitem{log gg} G. G. Gundersen, {\it Estimates for the logarithmic derivative of a meromorphic function}, J. London Math. Soc., {\bf{37}}, {\bf{17:1}} (1988),  88-104.

\bibitem{finitegg} G. G. Gundersen, {\it Finite order solution of second order linear differential equations}, Trans. Amer. Math. Soc., {\bf{305}} (1988), 415-429.

\bibitem{problemgg} G. G. Gundersen, {\it Research questions on meromorphic functions and complex differential equations}, Comput. Methods Funct. Theory, {\bf{17}} (2017), 195-209.

\bibitem{hayman} W. K. Hayman and J. Rossi, {\it Characteristic, maximum modulus and value distribution}, Trans. Amer. Math. Soc., {\bf{284}} (1984), 651-664.

\bibitem{heitt} J. Heittokangas, I. Laine, K. Tohge and Z. Wen, {\it Completely regular growth solutions of second order complex differential equations}, Ann. Acad. Sci. Fenn-M., {\bf{40}} (2015), 985-1003.

\bibitem{heller} S. Hellerstein, J. Miles and J. Rossi,{\it  On the growth of solutions of $f''+gf'+hf=0$}, Trans. Amer. Math. Soc.,  {\bf{324}} (1991), 693-706.


\bibitem{helle} E. Hille, {\it Lectures on Ordinary Differential Equations}, A Wiley-Interscience Publication, London, 1969.

\bibitem{holland} A.S.B. Holland, {\it Theory of Entire Function}, Academic Press, New York, 1973.

\bibitem{lainebook} I. Laine, {\it Nevanlinna Theory and Complex Differential Equations}, Walter de Gruyter, Berlin, New York, 1993.

\bibitem{jlongfab} J. R. Long, {\it Growth of solutions of second order complex linear differential equations with entire coefficients}, Filomat., {\bf{32}} (2018), 275-284.

\bibitem{jlong} J.R. Long, L. Shi, X. Wu and  S. Zhang, { \it On a question of Gundersen concerning the growth of solutions of linear differential equations},Ann. Acad. Sci. Fenn-M., {\bf{43}} (2018), 337-348.

\bibitem{extremal} J.R. Long, P.C. Wu and  Z. Zhang, {\it On the growth of solutions of second order linear differential equations with extremal coefficients}, Acta Math. Sinica (Engl. Ser.), {\bf{29(2)}} (2013), 365-372.

\bibitem{lang} J. K. Langley, {\it On complex oscillation and a problem of Ozawa}, Kodai Math. J., {\bf{9}} (1986), 430-439.

\bibitem{ozawa} M. Ozawa, {\it On a solution of $w''+e^{-z}w'+(az+b)w=0$}, Kodai Math. J., {\bf{3}} (1980), 295-309. 

\bibitem{wu} X.B. Wu, J. R. Long, J. Heittokangas and K. E. Qiu, {\it Second order complex linear differential equations with special functions or extremal functions as coefficients}, Electronic J. Differential Equations, {\bf{2015:143}} (2015), 1-15.

\bibitem{zhe} S. Z. Wu and X. M. Zheng, {\it On meromorphic solutions of some linear differential equations with entire coefficients being Fabry gap series}, Adv. Difference Equ., {\bf{2015:32}} (2015), 13pp.
\bibitem{yang} L. Yang, {\it Value Distribution Theory}, Springer-Verlag, Berlin, 1993.
\end{thebibliography}
\end{document}